\newtheorem{thm}{Theorem}
\newtheorem*{ack}{Acknowledgements}
\newtheorem{lm}[thm]{Lemma}
\newtheorem{plm}{Problem}
\newtheorem{coro}[thm]{Corollary}
\newtheorem{rmq}[thm]{Remark}
\numberwithin{thm}{section}
\numberwithin{equation}{section}
\newcommand{\s}[1]{\sigma_{ext}(#1)}
\DeclareMathOperator{\Span}{span}
\title[Extended eigenvectors of $S_B$]{On extended eigenvalues and extended eigenvectors of truncated shift}
\author{HASAN ALKANJO}
\address{Universit\'e de Lyon; Universit\'e Lyon 1; Institut Camille Jordan CNRS UMR 5208; 43, boulevard du 11 Novembre 1918, F-69622 Villeurbanne}
\email{alkanjo@math.univ-lyon1.fr}
\begin{document}

\begin{abstract}
We give a complete description of the set of extended eigenvectors of truncated shifts defined on the model spaces
$K_u^2:=H^2\ominus uH^2$, in the case of $u$ is a Blaschke product.
\end{abstract}
\maketitle
\section{Introduction and preliminaries}
Let $H$ be a complex Hilbert space, and denote by $\mathcal{L}(H)$ the algebra of all bounded linear operators on $H$.
If $T$ is an operator in $\mathcal{L}(H)$, then a complex number $\lambda$ is an extended eigenvalue of $T$ if there is a nonzero
operator $X$ such that $TX=\lambda XT$. We denote by the symbol $\sigma_{ext}(T)$ the set of extended eigenvalues of $T$.
The set of all extended eigenvectors corresponding to $\lambda$ will be denoted as $E_{ext}(\lambda)$.
Obviously $1\in\sigma_{ext}(T)$ for any operator $T$. Indeed, one can take $X$ being the identity operator.

Let $T$ in $\mathcal{L}(H)$, and let $\sigma(T)$ and $\sigma_p(T)$ denote the spectrum and the point spectrum of $T$ respectively.
By a theorem of Rosenblum \cite{ros}, it was established in \cite{pfini} that
\begin{equation}\label{inc}
 \sigma_{ext}(T)\subset\{\lambda\in\mathbb{C} : \sigma(T)\cap\sigma(\lambda T)\neq\emptyset\}.
\end{equation}

Moreover, when $H$ is finite dimensional, in \cite{pfini} the set of extended eigenvalues has been characterized by the
following theorem
\begin{thm}\label{fini}
 Let $T$ be an operator on a finite dimensional Hilbert space $H$.
Then $\sigma_{ext}(T)=\{\lambda\in\mathbb{C} : \sigma(T)\cap\sigma(\lambda T)\neq\emptyset\}.$
\end{thm}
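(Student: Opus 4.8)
The plan is to observe that the inclusion $\subseteq$ is already furnished by \eqref{inc}, so only the reverse inclusion $\supseteq$ requires proof. Fix $\lambda\in\mathbb{C}$ with $\sigma(T)\cap\sigma(\lambda T)\neq\emptyset$; I must produce a nonzero $X$ with $TX=\lambda XT$. Since $H$ is finite dimensional, $\sigma(T)=\sigma_p(T)$ consists exactly of the eigenvalues of $T$, and $\sigma(\lambda T)=\lambda\,\sigma(T)$. Hence the hypothesis says precisely that there exist $\alpha,\mu\in\sigma(T)$ with $\alpha=\lambda\mu$.

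Assume first $\lambda\neq 0$. I would choose a nonzero $u$ with $Tu=\alpha u$, and --- using that $\sigma(T^*)=\overline{\sigma(T)}$ together with $\mu\in\sigma(T)$ --- choose a nonzero $v$ with $T^*v=\overline{\mu}\,v$. Define the rank-one operator $X$ by $Xh=\langle h,v\rangle u$. Then $TXh=\langle h,v\rangle Tu=\alpha\langle h,v\rangle u$ and $XTh=\langle Th,v\rangle u=\langle h,T^*v\rangle u=\mu\langle h,v\rangle u$, so that $TX=\alpha X$ and $XT=\mu X$. Since $\alpha=\lambda\mu$, this yields $TX=\lambda XT$ with $X\neq 0$, whence $\lambda\in\sigma_{ext}(T)$.

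For the remaining case $\lambda=0$, the hypothesis forces $0\in\sigma(T)$, i.e.\ $\ker T\neq\{0\}$; picking $0\neq w\in\ker T$ and any $v\neq 0$, the operator $Xh=\langle h,v\rangle w$ is nonzero and satisfies $TX=0=\lambda XT$. The one point that needs care --- and which I expect to be the crux --- is the existence step in the main case: it rests on the finite-dimensionality through $\sigma(T)=\sigma_p(T)$ and $\sigma(T^*)=\overline{\sigma_p(T)}$, which guarantee genuine eigenvectors of both $T$ and $T^*$. In infinite dimensions these eigenvectors may fail to exist, which is exactly why \eqref{inc} is in general a proper inclusion.

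Alternatively, one can argue in one stroke: the map $X\mapsto TX-\lambda XT$ on the finite-dimensional space $\mathcal{L}(H)$ has, by Rosenblum's theorem, spectrum $\sigma(T)-\sigma(\lambda T)$, so $0$ lies in its spectrum exactly when $\sigma(T)\cap\sigma(\lambda T)\neq\emptyset$; in finite dimensions this forces the map to be non-injective, yielding the desired nonzero $X$ in its kernel. I expect the explicit rank-one construction above to be the cleaner route to record, with the Rosenblum argument clarifying why the equality is special to the finite-dimensional setting.
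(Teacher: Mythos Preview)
Your proof is correct and follows essentially the same route as the paper's: both reduce to the rank-one intertwiner $X=u\otimes v$ built from eigenvectors $Tu=\alpha u$ and $T^{*}v=\overline{\mu}\,v$, with finite-dimensionality guaranteeing that such eigenvectors exist. The only cosmetic difference is the case split---the paper divides according to whether $T$ is invertible (treating the non-invertible case for all $\lambda$ simultaneously via an operator with range in $\ker T$ and vanishing on $(\ker T^{*})^{\perp}$), whereas you divide on $\lambda=0$ versus $\lambda\neq 0$---and your alternative Rosenblum argument is a pleasant extra not present in the paper.
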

 \begin{proof}
   First we consider the case when $T$ is not invertible. In this situation
both $T$ and $T^*$ have nontrivial kernels. Let $X^\prime$ be a nonzero operator from kernel of $T^*$
to kernel of $T$. Define $X=X^\prime P$ where $P$ denotes the orthogonal projection on kernel of $T^*$.
Clearly, $X\neq0$, and $TX=0=\lambda XT$ for any $\lambda\in\mathbb{C}$. Consequently, $\sigma_{ext}(T)=\mathbb{C}$.
On the other hand, since $T$ is not invertible, for any complex number $\lambda$, $0\in\sigma(T)\cap\sigma(\lambda T)$.
Thus $$\s{T}=\mathbb{C}=\{\lambda\in\mathbb{C} : \sigma(T)\cap\sigma(\lambda T)\neq\emptyset\}.$$

Now assume that $T$ is invertible so that $0\notin\sigma(T)$. In view of (\ref{inc}) it suffices to show that
$\{\lambda\in\mathbb{C} : \sigma(T)\cap\sigma(\lambda T)\}\subset\s{T}.$
So suppose that $\alpha$ is a (necessarily nonzero) complex number such that $\alpha\in\sigma(T)$ and
$\alpha\in\sigma(\lambda T)$. Since $\alpha\in\sigma(T)$ there exists a vector $a$ such that $Ta=\alpha a$.
On the other hand, $\alpha\in\sigma(\lambda T)$ implies that $\lambda\neq 0$ so $\alpha/\lambda\in\sigma(T)$.
Therefore, $(\overline{\alpha/\lambda})\in\sigma(T^*)$ and there is a vector $b$ such that $T^*b=(\overline{\alpha/\lambda})b$.
Let $X=a\otimes b$. Then $TX=\lambda XT$ and consequently $\lambda\in\sigma(T)$.
 \end{proof}
From this theorem it derives the following consequences
\begin{coro}\label{fini1}
 Let $T$ be an operator on a finite dimensional Hilbert space $H$. Then
\begin{enumerate}
 \item If T is invertible then $\sigma_{ext}(T)=\{\alpha/\beta : \alpha,\beta\in\sigma(T)\}$, and
if $Ta=\alpha a$, $T^*b=\overline{\beta}b$ then $a\otimes b\in E_{ext}(\alpha/\beta)$.
 \item $\sigma_{ext}(T)=\{1\}$ if and only if $\sigma(T)=\{\alpha\}, \alpha\neq0$.
 \item $\sigma_{ext}(T)=\mathbb{C}$ if and only if $0\in\sigma(T)$. Moreover, this assertion remains available in
infinite dimensional Hilbert spaces if $0\in\sigma_p(T)\cap\sigma_p(T^*)$.
\end{enumerate}

\end{coro}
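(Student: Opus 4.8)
The plan is to derive all three statements directly from Theorem \ref{fini} together with the elementary spectral identity $\sigma(\lambda T)=\lambda\sigma(T)$. Under this identity the condition $\sigma(T)\cap\sigma(\lambda T)\neq\emptyset$ becomes the existence of $\alpha,\beta\in\sigma(T)$ with $\alpha=\lambda\beta$, so that Theorem \ref{fini} reads $\s{T}=\{\lambda\in\mathbb{C} : \alpha=\lambda\beta \text{ for some } \alpha,\beta\in\sigma(T)\}$. I would record this reformulation first, since each of the three parts is simply a reading of it under an extra hypothesis on $\sigma(T)$.

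For part (1), invertibility of $T$ means $0\notin\sigma(T)$, so every $\beta\in\sigma(T)$ is nonzero and the relation $\alpha=\lambda\beta$ is solved by $\lambda=\alpha/\beta$; this gives $\s{T}=\{\alpha/\beta:\alpha,\beta\in\sigma(T)\}$ at once. For the explicit extended eigenvector I would reuse the rank-one construction from the proof of Theorem \ref{fini}: with $Ta=\alpha a$ and $T^*b=\overline{\beta}b$, a short computation with $X=a\otimes b$ gives $TXx=\alpha\langle x,b\rangle a$ and $XTx=\beta\langle x,b\rangle a$, whence $TX=(\alpha/\beta)XT$, i.e. $a\otimes b\in E_{ext}(\alpha/\beta)$.

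For part (2), if $\sigma(T)=\{\alpha\}$ with $\alpha\neq 0$ then $T$ is invertible and part (1) gives $\s{T}=\{\alpha/\alpha\}=\{1\}$. Conversely, if $\s{T}=\{1\}$ then $T$ must be invertible: otherwise $0\in\sigma(T)$, hence $0\in\sigma(\lambda T)$ for every $\lambda$ and Theorem \ref{fini} forces $\s{T}=\mathbb{C}\neq\{1\}$. Thus part (1) applies and the equality $\{\alpha/\beta:\alpha,\beta\in\sigma(T)\}=\{1\}$ forces all elements of $\sigma(T)$ to coincide, giving $\sigma(T)=\{\alpha\}$ with $\alpha\neq 0$. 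For part (3), if $0\in\sigma(T)$ then $0\in\sigma(\lambda T)$ for every $\lambda$ (including $\lambda=0$), so $\sigma(T)\cap\sigma(\lambda T)\ni 0$ and Theorem \ref{fini} yields $\s{T}=\mathbb{C}$; conversely, if $T$ is invertible then part (1) exhibits $\s{T}$ as the set of ratios of the finitely many spectral values, a finite set which cannot be all of $\mathbb{C}$. The infinite-dimensional addendum does not follow from Theorem \ref{fini}, and I would instead repeat the kernel argument opening the proof of that theorem: when $0\in\sigma_p(T)\cap\sigma_p(T^*)$ both $\ker T$ and $\ker T^*$ are nonzero, so taking a nonzero $X'$ from $\ker T^*$ into $\ker T$ and setting $X=X'P$, with $P$ the orthogonal projection onto $\ker T^*$, produces a nonzero $X$ with $TX=0=\lambda XT$ for all $\lambda$.

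There is no serious obstacle here: the only points requiring care are the nonvanishing of the denominator $\beta$ (guaranteed by invertibility in part (1)) and the appeal to the finiteness of $\sigma(T)$ in the converse of part (3). This finiteness is exactly where finite-dimensionality is used, which is also why the infinite-dimensional case of part (3) must be handled by the separate kernel construction rather than by Theorem \ref{fini}.
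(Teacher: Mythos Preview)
Your proposal is correct and follows exactly the approach the paper intends: the paper offers no separate proof of this corollary, presenting it simply as ``consequences'' of Theorem~\ref{fini}, and your argument spells out precisely those consequences via $\sigma(\lambda T)=\lambda\sigma(T)$ together with the rank-one and kernel constructions already appearing in the proof of Theorem~\ref{fini}. There is nothing to add or correct.
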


The next section contains the needed background on the spaces $K_u^2$.
\begin{section}{background on $K_u^2$}
 Nothing in the section is new, and the bulk of it can be found in standard sources, for example \cite{nik},
\cite{berc}, \cite{foi} and \cite{don}.
\begin{subsection}{Basic notation, model spaces and kernel functions}
 Let $H^2$ be the standard Hardy space, the Hilbert space of holomorphic functions in the open unit disk
$\mathbb{D}\subset\mathbb{C}$ having square-summable Taylor coefficients at the origin. We let $S$ denote the
unilateral shift operator on $H^2$. Its adjoint, the backward shift, is given by
\begin{equation}
 S^*f(z)=\frac{f(z)-f(0)}{z}.
\end{equation}
For the remainder of the paper, $u$ will denote a non-constant inner function. the subspace $K_u^2=H^2\ominus uH^2$
is a proper nontrivial invariant subspace of $S^*$, the most general one by the well-known theorem of A. Beurling.
The compression of $S$ to $K_u^2$ will be denoted by $S_u$. Its adjoint, $S_u^*$, is the restriction of $S^*$ to $K_u^2$.
For $\lambda$ in $\mathbb{D}$, the kernel function in $H^2$ for the functional of evaluation at $\lambda$
will be denoted by $k_\lambda$; it is given explicitly by
\begin{equation}
 k_\lambda(z)=\frac{1}{1-\overline{\lambda}z}.
\end{equation}
 Letting $P_u$ denote the orthogonal projection from $L^2$ onto $K_u^2$. The kernel function in $K_u^2$
for the functional of evaluation at $\lambda$ will be denoted by $k_\lambda^u$. It is natural that $k_\lambda^u$
equals $P_u k_\lambda$, i.e.,
\begin{equation}
 k_\lambda^u(z)=\frac{1-\overline{u(\lambda)}u(z)}{1-\overline{\lambda}z}.
\end{equation}
\end{subsection}

\begin{subsection}{Riesz bases of $K_u^2$}
 It is known that the model space $K_u^2$ is finite dimensional if and only if $u$ is finite Blaschke product
\begin{equation}\label{blf}
 B(z)=\prod_{i=1}^{n}b_{\alpha_i}^{p_i},\ \ with\ \ 
b_{\lambda}=\frac{\lambda-z}{1-\overline{\lambda}z}\ \ for\ \lambda\in\mathbb{D},\ \ p_i,n\in\mathbb{N^*},\
and\ \alpha_i\neq\alpha_j\ for\ i\neq j.
\end{equation}

In the general case, if $B$ is an infinite Blaschke product defined by
\begin{equation}\label{blinf}
B(z)=\prod_{i=1}^{\infty}\frac{|\alpha_i|}{\alpha_i } b_{\alpha_i}^{p_i},\  p_i\in\mathbb{N^*},
\end{equation}
then the following Cauchy kernels
\begin{equation}
 e_{i,l}(z)=\frac{l!z^l}{({1-\overline{\alpha_i}z})^{l+1}},\ \forall i\geq1,\ \ l=0,...,p_i-1,
\end{equation}
span the space $K_B^2$.
In particular, if $p_i=1$ for $i$ in $\mathbb{N}^*$, then $e_{i,0}$ will be denoted by $e_i$, i.e.,
\begin{equation}
 e_i(z)=k_{\alpha_i}^B(z).
\end{equation}
If we denote by $\{e_{i,l}^* : i\geq1, \ l=0,...,p_i-1\}$ (see \cite{don}) the dual set of 
$\{e_{i,l} : i\geq1, \ l=0,...,p_i-1\}$,
(i.e., the set of kernels verifying
\begin{equation}\label{cronic}
 \langle e_{i,k}^*,e_{j,l}\rangle=\delta_{ij}\delta_{kl},\ \forall\ i,j\geq1,\ k=0,...,p_i-1,\ l=0,...,p_j-1,
\end{equation}
where $\langle.,.\rangle$ denotes the inner product in $L^2$, and $\delta_{ij}$ denotes the well-known Kronecker 
$\delta-$symbol), then we have the following lemma
\begin{lm}\label{eigenvect}
 If $B$ is a Blaschke product defined by (\ref{blinf}), then
\begin{equation*}
 S_B^*e_{i,l} = \left\{
  \begin{array}{l l}
    \overline{\alpha_i}e_{i,0} & \quad \text{if $l=0$}\\
    le_{i,l-1}+\overline{\alpha_i}e_{i,l} & \quad \text{otherwise,}\\
  \end{array} \right.
\end{equation*}
and
\begin{equation*}
 S_Be^*_{i,l} = \left\{
  \begin{array}{l l}
    \alpha_i e^*_{i,p_i-1} & \quad \text{if $l=p_i-1$}\\
    \alpha_i e^*_{i,l}+(l+1)e^*_{i,l+1} & \quad \text{otherwise.}\\
  \end{array} \right.
\end{equation*}
\end{lm}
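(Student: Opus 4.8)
The two displayed formulas are adjoint to one another, so my plan is to prove the first by a direct computation and then deduce the second by adjunction together with the biorthogonality relation (\ref{cronic}).

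For the action of $S_B^*$, the key observation is that $S_B^*$ is nothing but the restriction of the backward shift $S^*$ to $K_B^2$, so I can compute $S^*e_{i,l}$ from $S^*f(z)=(f(z)-f(0))/z$. Since $e_{i,l}(0)=0$ for every $l\geq 1$ while $e_{i,0}(0)=1$, the case $l=0$ gives $S^*e_{i,0}(z)=(e_{i,0}(z)-1)/z=\overline{\alpha_i}/(1-\overline{\alpha_i}z)=\overline{\alpha_i}e_{i,0}(z)$, and the case $l\geq 1$ gives $S^*e_{i,l}(z)=e_{i,l}(z)/z=l!\,z^{l-1}/(1-\overline{\alpha_i}z)^{l+1}$. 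It then remains to check the elementary identity
\[
\frac{l!\,z^{l-1}}{(1-\overline{\alpha_i}z)^{l+1}}
= l\,\frac{(l-1)!\,z^{l-1}}{(1-\overline{\alpha_i}z)^{l}}
+ \overline{\alpha_i}\,\frac{l!\,z^{l}}{(1-\overline{\alpha_i}z)^{l+1}},
\]
which is verified at once by placing the right-hand side over the common denominator $(1-\overline{\alpha_i}z)^{l+1}$ and using $z^{l-1}(1-\overline{\alpha_i}z)+\overline{\alpha_i}z^{l}=z^{l-1}$. The right-hand side is precisely $l\,e_{i,l-1}+\overline{\alpha_i}e_{i,l}$, giving the first formula; in particular the $e_{i,0}$ are eigenvectors of $S_B^*$ and the $e_{i,l}$ assemble into Jordan chains.

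For the action of $S_B$, I would exploit that $S_B=(S_B^*)^*$ on $K_B^2$, so that for all admissible indices $\langle S_B e^*_{i,l},e_{j,m}\rangle=\langle e^*_{i,l},S_B^*e_{j,m}\rangle$. Substituting the first formula for $S_B^*e_{j,m}$ and applying (\ref{cronic}), the right-hand side collapses to $\alpha_j\delta_{ij}\delta_{l,m}+m\,\delta_{ij}\delta_{l,m-1}$. Comparing this with the inner products of the candidate expressions $\alpha_i e^*_{i,l}+(l+1)e^*_{i,l+1}$ (for $l<p_i-1$) and $\alpha_i e^*_{i,p_i-1}$ (for $l=p_i-1$) against each $e_{j,m}$, again via (\ref{cronic}), one checks that the two families of scalars coincide for every $(j,m)$. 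Since $\{e_{j,m}\}$ is total in $K_B^2$, an element of $K_B^2$ is determined by its inner products against all the $e_{j,m}$, and the second formula follows.

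The routine parts are the partial-fraction identity and the bookkeeping of Kronecker symbols. The one place needing genuine care is the boundary case $l=p_i-1$ in the second formula: the off-diagonal contribution $m\,\delta_{ij}\delta_{l,m-1}$ would, for $i=j$, force $m=p_i$, which lies outside the admissible range $0\leq m\leq p_i-1$, so this term never contributes — which is exactly why the Jordan chain for $S_B$ terminates at $e^*_{i,p_i-1}$ with no successor. A secondary point to watch is the conjugation incurred when a scalar is extracted from the second slot of the inner product, which is what converts the factor $\overline{\alpha_j}$ appearing in $S_B^*$ into the factor $\alpha_i$ appearing in $S_B$.
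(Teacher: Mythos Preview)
Your argument is correct and is essentially the same as the paper's: a direct computation of $S^*e_{i,l}$ via the backward-shift formula and the partial-fraction identity for the first display, and adjunction $\langle S_Be^*_{i,l},e_{j,m}\rangle=\langle e^*_{i,l},S_B^*e_{j,m}\rangle$ combined with the biorthogonality (\ref{cronic}) for the second. Your write-up is in fact more explicit than the paper's, which leaves the verification of the second display to the reader; your handling of the boundary case $l=p_i-1$ and of the conjugation is exactly the point one needs to check.
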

\begin{proof}
 For the first equality, if $l=0$, then
$$S_B^*e_{i,0}(z)=\frac{k_{\alpha_i}^B(z)- k_{\alpha_i}^B(0)}{z}=\frac{\overline{\alpha_i}}{1-\overline{\alpha_i}z }=
\overline{\alpha_i} e_{i,0}(z).$$
Otherwise,
$$S_B^*e_{i,l}(z)=\frac{l!z^{l-1}}{({1-\overline{\alpha_i}z})^{l+1}}=
l!(\frac{z^{l-1}}{({1-\overline{\alpha_i}z})^l}+\overline{\alpha_i}\frac{z^l}{({1-\overline{\alpha_i}z})^{l+1}})$$
$$=le_{i,l-1}(z)+\overline{\alpha_i}e_{i,l}(z).$$
For the second equality, it is sufficient to use the first one together with the fact that
$$\langle S_Be_{i,k}^*,e_{j,l}\rangle=\langle e_{i,k}^*,S_B^*e_{j,l}\rangle,\
\forall\ i,j\geq1,\ k=0,...,p_i-1,\ l=0,...,p_j-1.$$
\end{proof}
If we denote by $E_i=\Span\{e_{i,0},...,e_{i,p_i-1}\}$ and by $E_i^*=\Span\{e^*_{i,0},...,e^*_{i,p_i-1}\}$,
for $i$ in $\mathbb{N}^*$. Then Lemma \ref{eigenvect} derives the following consequences
\begin{coro}\label{vect}
 For each $i$ in $\mathbb{N}^*$, we have 
\begin{enumerate}
 \item The subspaces $E_i$ and $E_i^*$ are invariant of $S_B^*$ and $S_B$ respectively.
 \item Let $l\in\{0,1,...,p_i-1\} $. For each $k=0,1,...,l$, we have 
$$(S_B-\alpha_iI)^{k}e^*_{i,p_i-l-1}\neq0,\  and\  (S_B-\alpha_iI)^{l+1}e^*_{i,p_i-l-1}=0.$$
In particular, $\ker(S_B-\alpha_iI)^{l+1}=\Span\{e^*_{i,p_i-l-1},...,e^*_{i,p_i-1}\}$, and for all $k\geq p_i$, 
we have $\ker(S_B-\alpha_iI)^{k}=\ker(S_B-\alpha_iI)^{p_i}=E_i^*.$
\end{enumerate}
 
\end{coro}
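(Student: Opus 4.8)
The plan is to extract everything from the action of $S_B$ on the dual basis recorded in Lemma~\ref{eigenvect}, reorganized as the action of the shifted operator $S_B-\alpha_i I$. Assertion (1) is immediate: by the first formula of Lemma~\ref{eigenvect}, $S_B^*e_{i,l}$ lies in $\Span\{e_{i,l-1},e_{i,l}\}\subset E_i$, so $E_i$ is $S_B^*$-invariant; by the second formula $S_Be_{i,l}^*\in\Span\{e_{i,l}^*,e_{i,l+1}^*\}\subset E_i^*$, so $E_i^*$ is $S_B$-invariant.

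For (2), the key step is to subtract the diagonal term in the second formula of Lemma~\ref{eigenvect}, which gives
\begin{equation*}
(S_B-\alpha_i I)e_{i,l}^*=
\begin{cases}
(l+1)\,e_{i,l+1}^* & l<p_i-1,\\
0 & l=p_i-1.
\end{cases}
\end{equation*}
Thus $S_B-\alpha_i I$ acts on $E_i^*$ as a nilpotent raising operator. Iterating, with $m=p_i-l-1$ one obtains $(S_B-\alpha_i I)^k e_{i,m}^*=\tfrac{(m+k)!}{m!}\,e_{i,m+k}^*$ whenever $m+k\le p_i-1$, i.e. for $k\le l$. In particular $(S_B-\alpha_i I)^l e_{i,p_i-l-1}^*$ is a nonzero multiple of $e_{i,p_i-1}^*$, while one further application annihilates it. This proves both displayed non-vanishing/vanishing relations at once and yields the inclusion $\Span\{e_{i,p_i-l-1}^*,\dots,e_{i,p_i-1}^*\}\subset\ker(S_B-\alpha_i I)^{l+1}$.

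For the reverse inclusion I would exploit the block structure. By part (1) each $E_j^*$ is invariant under $S_B$, hence under $(S_B-\alpha_i I)^{l+1}$. Because the zeros $\alpha_j$ are pairwise distinct, on $E_j^*$ with $j\neq i$ the operator $S_B-\alpha_i I$ is, in the basis $e_{j,0}^*,\dots,e_{j,p_j-1}^*$, upper triangular with constant nonzero diagonal $\alpha_j-\alpha_i$, hence invertible; so $(S_B-\alpha_i I)^{l+1}$ is injective on every $E_j^*$ with $j\neq i$. Decomposing an arbitrary $v\in\ker(S_B-\alpha_i I)^{l+1}$ along the basis $\{e_{j,m}^*\}$ and separating the blocks, the components outside $E_i^*$ must vanish, and the $E_i^*$-component lies in $\Span\{e_{i,p_i-l-1}^*,\dots,e_{i,p_i-1}^*\}$ by the nilpotent computation above (this span is exactly the kernel of $(S_B-\alpha_i I)^{l+1}$ restricted to $E_i^*$). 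Taking $l=p_i-1$ gives $\ker(S_B-\alpha_i I)^{p_i}=E_i^*$; and since $S_B-\alpha_i I$ is nilpotent of index exactly $p_i$ on $E_i^*$ and injective on every other block, the kernel stabilizes, so $\ker(S_B-\alpha_i I)^k=E_i^*$ for all $k\ge p_i$.

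The step I expect to be delicate is the block decomposition used in the reverse inclusion: writing $v$ as a sum over the blocks $E_j^*$ and projecting onto each component is transparent when $B$ is a \emph{finite} Blaschke product, where $K_B^2=\bigoplus_j E_j^*$ is a genuine finite direct sum. For an \emph{infinite} Blaschke product it relies on the Riesz basis property of $\{e_{i,l}^*\}$ recorded in this section, so that the biorthogonal expansion of $v$ converges and the block projections are well defined and bounded. Everything else is just the bookkeeping of a single Jordan block of size $p_i$ with eigenvalue $\alpha_i$.
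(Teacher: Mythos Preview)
Your proof is correct and follows essentially the same strategy as the paper's: read off from Lemma~\ref{eigenvect} that $S_B-\alpha_iI$ acts on $E_i^*$ as a nilpotent raising operator, and then show injectivity on each $E_j^*$ with $j\neq i$. The only differences are cosmetic---you iterate to a closed formula where the paper runs an induction, and you invoke the triangular matrix with nonzero diagonal $\alpha_j-\alpha_i$ where the paper argues by contradiction using invariance of $E_j^*$; you also flag explicitly the block-decomposition issue in the infinite case, which the paper passes over.
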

\begin{proof}
 The first point is trivial. For the second one, we will argue by induction. This result is trivial for $l=0$.
We assume that it is true for all $k=0,1,...,l-1$, i.e.,
$$x:=(S_B-\alpha_iI)^{l-1}e^*_{i,p_i-l}\neq0,\  and\  (S_B-\alpha_iI)x=0.$$
It is enough to show that
$$(S_B-\alpha_iI)^{l}e^*_{i,p_i-l-1}\neq0,\  and\  (S_B-\alpha_iI)^{l+1}e^*_{i,p_i-l-1}=0.$$
By using Lemma \ref{eigenvect} and the induction hypothesis, we have that
$$(S_B-\alpha_iI)^{l}e^*_{i,p_i-l-1}=(p_i-l)x\neq0,$$
and
$$(S_B-\alpha_iI)^{l+1}e^*_{i,p_i-l-1}=(p_i-1)(S_B-\alpha_iI)x=0.$$
Consequently, $\Span\{e^*_{i,p_i-l-1},...,e^*_{i,p_i-1}\}\subset \ker(S_B-\alpha_iI)^{l+1}$ and $(S_B-\alpha_iI)^{l+1}$
is injective on $\Span\{e^*_{i,p_i-l-1},...,e^*_{i,p_i-1}\}$
To complete the proof, we shall show that $(S_B-\alpha_iI)^{l+1}$ is injective on 
$$\Span\{E_j^*:j\geq1\ and\ j\neq i\}.$$
But the subspaces $E_j^*$ are invariant of $(S_B-\alpha_iI)^{l+1}$. Thus, it is sufficient to show that 
$(S_B-\alpha_iI)^{l+1}$ is injective on $E_j^*$ for any $j\neq i$.
To do so, suppose to the contrary that $(S_B-\alpha_iI)^{l+1}x=0$ for $x\in E_j^*$ and $j\neq i$, then 
$(S_B-\alpha_iI)^{l}x\in \Span\{e^*_{i,p_i-1}\}$, which contradicts the fact that $E_j^*$ is 
invariant of $(S_B-\alpha_iI)^{l}$.

\end{proof}

Biswas and Petrovic determine in \cite{pfini} the extended spectrum of truncated shift.
Our main result, that is Theorem \ref{moi2}, gives a complete description of the set of extended eigenvectors of 
truncated shift $S_B$. Moreover, it affirms the result of Biswas and Petrovic for the set $\s{S_B}$ without
using the Sz.-Nagy-Foias commutant lifting theorem. Consequently, it strengthens \cite[Theorem 3.10]{pfini}.
\end{subsection}
\end{section}
\begin{section}{Extended eigenvalues and extended eigenvectors of $S_B$}
If $B$ is a Blaschke product defined by (\ref{blinf}), it was shown in \cite{nik} that 
$\sigma(S_B)=\overline{\{\alpha_i\}}_{i\geq1}$, and $\sigma_p(S_B)= \{\alpha_i\}_{i\geq1}$.
For the remainder of this paper, the zeros$\{\alpha_i\}_{i\geq1}$ are all nonzero. Before showing our main result,
we give theorem \ref{moi1} as a direct application of Theorem \ref{inc} and Lemma \ref{eigenvect}.
If $B$ is a finite Blaschke product defined by (\ref{blf})
with $p_i=1$ for all $i$,
then by Corollary \ref{fini1}, $\s{S_B}=\{\alpha_i/\alpha_j : i,j=1...n\}$ and
$e^*_i\otimes e_j\in E_{ext}(\alpha_i/\alpha_j)$.
It is natural to ask weather this eigenvector is unique or not.
The following theorem answers this question affirmatively.

\begin{thm}\label{moi1}
 If $B$ is a finite Blaschke product defined in (\ref{blf}) with $p_i=1$ for all $i$, then
$\s{S_B}=\{\alpha_i/\alpha_j : i,j=1,...,n\}$ and $E_{ext}(\alpha_i/\alpha_j)=
\Span\{e^*_k\otimes e_l : \alpha_k/\alpha_l=\alpha_i/\alpha_j\}$.
\end{thm}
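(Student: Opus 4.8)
The plan is to exploit the fact that $E_{ext}(\alpha_i/\alpha_j)\cup\{0\}$ is a linear subspace of $\mathcal{L}(K_B^2)$, and to diagonalise the defining equation $S_BX=\lambda XS_B$ in a convenient basis of the (here finite-dimensional) operator space $\mathcal{L}(K_B^2)$. Since $\dim K_B^2=n$ and both $\{e_l\}_{l=1}^n$ and its dual set $\{e_k^*\}_{k=1}^n$ are bases of $K_B^2$, the $n^2$ rank-one operators $\{e_k^*\otimes e_l:1\le k,l\le n\}$ form a basis of $\mathcal{L}(K_B^2)$; here $a\otimes b$ denotes the operator $x\mapsto\langle x,b\rangle a$, as in the proof of Theorem \ref{fini}.

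The decisive observation is that each basis element is a joint eigenvector for left- and right-multiplication by $S_B$. From Lemma \ref{eigenvect} (with $p_k=p_l=1$) we have $S_Be_k^*=\alpha_ke_k^*$ and $S_B^*e_l=\overline{\alpha_l}e_l$, whence
\begin{equation*}
S_B(e_k^*\otimes e_l)=\alpha_k\,(e_k^*\otimes e_l)\quad\text{and}\quad (e_k^*\otimes e_l)S_B=\alpha_l\,(e_k^*\otimes e_l).
\end{equation*}
Thus for an arbitrary $X=\sum_{k,l}c_{kl}\,(e_k^*\otimes e_l)$ one computes $S_BX=\sum_{k,l}c_{kl}\alpha_k\,(e_k^*\otimes e_l)$ and $\lambda XS_B=\lambda\sum_{k,l}c_{kl}\alpha_l\,(e_k^*\otimes e_l)$, so the operator equation collapses into a scalar one.

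Comparing coefficients in the linearly independent family $\{e_k^*\otimes e_l\}$, the equation $S_BX=\lambda XS_B$ is therefore equivalent to the system $c_{kl}(\alpha_k-\lambda\alpha_l)=0$ for all $k,l$. Hence a nonzero solution exists precisely when $\alpha_k/\alpha_l=\lambda$ for some $k,l$ (which re-proves $\s{S_B}=\{\alpha_i/\alpha_j\}$ without invoking the Sz.-Nagy--Foias commutant lifting theorem), and for $\lambda=\alpha_i/\alpha_j$ the solutions are exactly those $X$ whose coefficients vanish whenever $\alpha_k/\alpha_l\neq\lambda$; that is, $E_{ext}(\alpha_i/\alpha_j)=\Span\{e_k^*\otimes e_l:\alpha_k/\alpha_l=\alpha_i/\alpha_j\}$, as claimed. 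The only points requiring care are the basis property of $\{e_k^*\otimes e_l\}$ and the two action formulas above; once these are established the conclusion is pure linear algebra, and I anticipate no genuine obstacle beyond the bookkeeping of matching coefficients.
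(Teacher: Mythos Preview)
Your proof is correct and follows essentially the same approach as the paper: both expand $X$ in the basis $\{e^*_k\otimes e_l\}$ of $\mathcal{L}(K_B^2)$, use Lemma~\ref{eigenvect} to compute the left and right actions of $S_B$ on these rank-one operators, and then compare coefficients. The only cosmetic difference is that the paper passes through the identity $S_B(e^*_k\otimes e_l)=\frac{\alpha_k}{\alpha_l}(e^*_k\otimes e_l)S_B$ and cancels $S_B$ via its invertibility, whereas you compare coefficients directly; your version is arguably cleaner.
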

\begin{proof} Since $\{e_i\}_{i=1}^n$ and $\{e^*_i\}_{i=1}^n$ are bases Riesz for $K_B^2$,
the set $\{E_{ij}:=e^*_i\otimes e_j\}_{i,j=1}^n$ is a basis Riesz for $\mathcal{L}(K_B^2)$. Now assume that $X\in \mathcal{L}(K_B^2)$
is a solution to the equation
\begin{equation*}
 S_BX=\frac{\alpha_i}{\alpha_j}XS_B,
\end{equation*}
then there are a family of complex numbers $\{a_{ij}\}_{i,j=1}^n$ such that
\begin{equation*}
 S_B(\sum_{k,l=1}^na_{kl}E_{kl})=\frac{\alpha_i}{\alpha_j}(\sum_{k,l=1}^na_{kl}E_{kl})S_B,
\end{equation*}
hence
\begin{equation*}
 (\sum_{k,l=1}^n\frac{\alpha_k}{\alpha_l}a_{kl}E_{kl})S_B=(\sum_{k,l=1}^n\frac{\alpha_i}{\alpha_j}a_{kl}E_{kl})S_B,
\end{equation*}
Since $S_B$ is invertible and $\{E_{ij}:=e^*_i\otimes e_j\}_{i,j=1}^n$ is a Riesz basis for $\mathcal{L}(K_B^2)$,
\begin{equation*}
 \frac{\alpha_k}{\alpha_l}a_{kl}=\frac{\alpha_i}{\alpha_j}a_{kl},\ \forall k,l=1,...,n,
\end{equation*}
thus $$E_{ext}(\frac{\alpha_i}{\alpha_j})=
\Span\{e^*_k\otimes e_l : \frac{\alpha_k}{\alpha_l}=\frac{\alpha_i}{\alpha_j}\}.$$
\end{proof}

\begin{rmq}
if  $\alpha_k/\alpha_l\neq\alpha_i/\alpha_j$ for all $(k,l)\neq(i,j) $, then
$$E_{ext}(\frac{\alpha_i}{\alpha_j})=\{e^*_i\otimes e_j\},$$ that is why we have said that this solution is unique.

\end{rmq}
Now, let $B$ be an infinite Blaschke product as in (\ref{blinf}), and let $\{\gamma_i\}_{i\in I}$ be the set of
limit points of $\{\alpha_i\}_{i\geq1}$ on the circle $\mathbb{T}=\{z\in\mathbb{C} : |z|=1\}$. By (\ref{inc}), we have
$$\s{S_B}\subset\{\frac{\alpha_i}{\alpha_j} : i,j\geq1\}\cup\{\frac{\alpha_i}{\gamma_j} : i\geq1,\ j\in I\}
\cup\{\frac{\gamma_i}{\alpha_j} : i\in I,\ j\geq1\}.$$
The following theorem shows that this inclusion is proper, more precisely
\begin{thm}\label{moi2}
 If $B$ is an infinite Blaschke product defined by (\ref{blinf}), then
$$\s{S_B}=\{\frac{\alpha_i}{\alpha_j} : i,j\geq1\},$$
and for any $i,j\geq1$, we have
$$E_{ext}(\frac{\alpha_i}{\alpha_j})=
\Span\{\sum_{k=0}^{l}(\sum_{r=0}^{k}c_{k-r}(\frac{\alpha_m}{\alpha_n})^r
\frac{(l+r-k)!(p_m-r-1)!}{(l-k)!(p_m-1)!} e^*_{m,p_m-r-1})\otimes e_{n,l-k}$$
$$\forall m,n\geq1\ where\ \frac{\alpha_m}{\alpha_n}=\frac{\alpha_i}{\alpha_j},\ l=0,...,\min(p_m-1,p_n-1),\ 
c_{k-r}\in\mathbb{C}\ and\ c_0\neq0\}.$$
\end{thm}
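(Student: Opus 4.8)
The plan is to reduce everything to the single operator identity obtained by iterating the defining relation $S_BX=\lambda XS_B$. First I would prove by induction on $k$ that any $X$ with $S_BX=\lambda XS_B$ (with $\lambda\neq0$) and any $\alpha\in\mathbb{C}$ satisfy
\begin{equation*}
(S_B-\alpha I)^kX=\lambda^kX\bigl(S_B-\tfrac{\alpha}{\lambda}I\bigr)^k .
\end{equation*}
The base case is a one-line rearrangement of $S_BX=\lambda XS_B$, and the inductive step merely pushes one factor of $(S_B-\alpha I)$ through. This identity is the engine for both halves of the theorem.

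For the spectral inclusion $\s{S_B}\subset\{\alpha_i/\alpha_j\}$ I would apply the identity with $\alpha=\lambda\alpha_n$ and $k=p_n$ to a vector $v\in E_n^*$. Since $E_n^*=\ker(S_B-\alpha_nI)^{p_n}$ by Corollary \ref{vect}, the right-hand side annihilates $v$, so $Xv\in\ker(S_B-\lambda\alpha_nI)^{p_n}$. Now $\lambda\alpha_n$ is either outside $\sigma(S_B)$, or one of the unimodular limit points $\gamma_j$, or some $\alpha_m$; in the first two cases $S_B-\lambda\alpha_nI$ is injective (the $\gamma_j$ are not eigenvalues, $\sigma_p(S_B)=\{\alpha_m\}$), forcing $Xv=0$. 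Hence $X$ kills every $E_n^*$ for which $\lambda\alpha_n\notin\{\alpha_m\}$. Because $\{e^*_{i,l}\}$ is the dual Riesz basis we have $\bigvee_nE_n^*=K_B^2$, so, $X$ being bounded, a nonzero $X$ must act nontrivially on some $E_n^*$, giving $\lambda=\alpha_m/\alpha_n$. The reverse inclusion, together with the $l=0$ members of the asserted spanning set, comes from the rank-one operators $e^*_{i,p_i-1}\otimes e_{j,0}$: by Lemma \ref{eigenvect} the vector $e^*_{i,p_i-1}$ is an eigenvector of $S_B$ at $\alpha_i$ and $e_{j,0}$ an eigenvector of $S_B^*$ at $\overline{\alpha_j}$, so the rank-one recipe of Corollary \ref{fini1} (which I would re-verify by direct computation, since we are now infinite dimensional) places them in $E_{ext}(\alpha_i/\alpha_j)$.

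For the description of $E_{ext}(\lambda)$ I would exploit the block structure just uncovered. The identity shows $X(E_n^*)\subset E_m^*$ whenever $\alpha_m=\lambda\alpha_n$, and since the $\alpha$'s are distinct this is an injective partial matching $n\mapsto m$; thus it suffices to determine, for each matched pair $(m,n)$, all linear $Y=X|_{E_n^*}\colon E_n^*\to E_m^*$ satisfying the restricted relation. Writing $S_B|_{E_n^*}=\alpha_nI+N_n$ with $N_n$ the weighted nilpotent shift $N_ne^*_{n,l}=(l+1)e^*_{n,l+1}$ of Lemma \ref{eigenvect}, and using $\lambda\alpha_n=\alpha_m$, the relation collapses to $N_mY=\lambda YN_n$. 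Expanding $Ye^*_{n,s}=\sum_ty_{ts}e^*_{m,t}$ turns this into the scalar recurrence $y_{t,s+1}=\frac{t}{\lambda(s+1)}y_{t-1,s}$ together with the boundary condition $Ye^*_{n,p_n-1}\in\ker N_m=\Span\{e^*_{m,p_m-1}\}$ coming from $N_ne^*_{n,p_n-1}=0$. The recurrence decouples along the diagonals $t-s=\delta$, so each solution is determined by its initial value $y_{\delta,0}$ via $y_{t,s}=\frac{t!}{\delta!\,\lambda^s s!}y_{\delta,0}$ (and vanishes for $t<s$); imposing the boundary condition kills exactly the diagonals with $\delta<\max(0,p_m-p_n)$, leaving the $\min(p_m,p_n)$ free diagonals $\delta=p_m-1-l$, $l=0,\dots,\min(p_m-1,p_n-1)$. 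Converting back to the rank-one form $e^*_{m,\cdot}\otimes e_{n,\cdot}$ and choosing the normalization $y_{\delta,0}=\lambda^l l!(p_m-1-l)!/(p_m-1)!$ reproduces the factorial coefficients in the statement, while the parametrization by $c_0,\dots,c_l$ with $c_0\neq0$ simply records that the level-$l$ generator is a combination of the pure diagonal solutions of levels $\le l$.

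The main obstacle is this final block computation: verifying that the closed form solves the recurrence and, above all, checking that the boundary condition eliminates precisely the diagonals $\delta<\max(0,p_m-p_n)$, so that the surviving index set is exactly $l=0,\dots,\min(p_m-1,p_n-1)$; this is where the two regimes $p_m\ge p_n$ and $p_m<p_n$ must be reconciled. A secondary but genuine delicate point is the meaning of $\Span$ when the ratio $\alpha_i/\alpha_j$ is attained by infinitely many pairs $(m,n)$: the per-block solutions are of finite rank, so the algebraic span produces finite-rank intertwiners, and one must argue that the block decomposition together with $\bigvee_nE_n^*=K_B^2$ indeed accounts for every bounded $X\in E_{ext}(\lambda)$, interpreting the span as the totality of these block-diagonal operators.
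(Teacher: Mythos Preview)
Your proposal is correct and follows the same strategic outline as the paper: use the intertwining relation to show that $X$ maps each generalized eigenspace $E_n^*$ into $E_m^*$ (where $\lambda\alpha_n=\alpha_m$, or else $X|_{E_n^*}=0$), and then determine the resulting block maps explicitly. The paper, however, does not isolate the polynomial identity $(S_B-\alpha I)^kX=\lambda^kX(S_B-\tfrac{\alpha}{\lambda}I)^k$ as a tool; instead it applies $S_BX=\lambda XS_B$ directly to each $e^*_{j,l}$ via Lemma~\ref{eigenvect}, picks an $l$ with $Xe^*_{j,l}\neq0$, deduces $Xe^*_{j,l}=c_0e^*_{i,p_i-1}$ for some $i$ with $\lambda=\alpha_i/\alpha_j$, and then steps down through $e^*_{j,l-1},e^*_{j,l-2},\dots$ computing the coefficients one at a time, arriving at a contradiction with Corollary~\ref{vect} when $l\ge p_i$. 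Your reduction to the Sylvester-type equation $N_mY=\lambda YN_n$ and its solution along diagonals $t-s=\delta$ is a more systematic packaging of exactly the same recursion; the paper's step-by-step descent is the special case where one walks down a single diagonal. Both arguments ultimately rely on the $e^*_{i,l}$ spanning $K_B^2$; the paper asserts this without comment (``a set which spans the space $K_B^2$''), and you are right to flag it, together with the meaning of $\Span$ over infinitely many blocks, as the genuinely delicate point.
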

\begin{proof}
 Let $\lambda\in\mathbb{C}$ and $X\in\mathcal{L}(K_B^2)$ be such that
$$S_BX=\lambda XS_B,$$ then by Lemma \ref{eigenvect}, for all $j\geq1$ we have
\begin{equation*}
 S_BXe^*_{j,l} = \left\{
  \begin{array}{l l}
    \lambda\alpha_j Xe^*_{j,p_j-1} & \quad \text{if $l=p_j-1$}\\
    \lambda\alpha_j Xe^*_{j,l}+\lambda(l+1)Xe^*_{j,l+1} & \quad \text{if $l=0,...,p_j-2$.}\\
  \end{array} \right.
\end{equation*}
If $X\neq0$, then necessarily there are $i,j\geq1$, $l$ in $\{0,1,...,p_j-1\}$ and $(c_{0}\neq0)$ in $\mathbb{C}$ 
such that
$$\lambda=\frac{\alpha_i}{\alpha_j}\   and\   Xe^*_{j,l}=c_{0}e^*_{i,p_i-1}.$$
Then
\begin{equation}\label{q1}
 S_BXe^*_{j,l-1}=\frac{\alpha_i}{\alpha_j}X(\alpha_je^*_{j,l-1}+le^*_{j,l}),
\end{equation}
$$(S_B-\alpha_iI)Xe^*_{j,l-1}=\frac{\alpha_i}{\alpha_j}lc_{0}e^*_{i,p_i-1},$$
consequently there exist complex numbers $(c_{0}^{(1)}\neq0)$ and $c_1$ such that
$$Xe^*_{j,l-1}=c_{0}^{(1)}e^*_{i,p_i-2}+c_1e^*_{i,p_i-1},$$
moreover, by (\ref{q1})
$$c_{0}^{(1)}(\alpha_ie^*_{i,p_i-2}+(p_i-1)e^*_{i,p_i-1})+c_1\alpha_ie^*_{i,p_i-1}=
\alpha_i(c_{0}^{(1)}e^*_{i,p_i-2}+c_1e^*_{i,p_i-1})+\frac{\alpha_i}{\alpha_j}lc_{0}e^*_{i,p_i-1},$$
hence
$$c_{0}^{(1)}=\frac{\alpha_i}{\alpha_j}\frac{l}{p_i-1}c_{0}.$$
By repeating the same calculation a number of times equal to $min(p_i-2,l-1)$, we obtain that
$$Xe^*_{j,l-k}=\sum_{r=0}^{k}c_{k-r}^{(r)}e^*_{i,p_i-r-1},\  where$$
$$c_{k-r}^{(r)}=(\frac{\alpha_i}{\alpha_j})^r\frac{(l+r-k)!(p_i-r-1)!}{(l-k)!(p_i-1)!}c_{k-r},\ k=2,...,min(p_i-1,l),$$
thus, if $l\geq p_i$, we have
$$(S_B-\alpha_iI)Xe^*_{j,l-p_i}=\frac{\alpha_i}{\alpha_j}(l-p_i+1)
\sum_{r=0}^{p_i-1}c_{p_i-1-r}^{(r)}e^*_{i,p_i-1-r},\ where\ c_{0}^{(p_i-1)}\neq0,$$
therefore
$$(S_B-\alpha_iI)^{p_i}Xe^*_{j,l-p_i}\neq0\ and\
(S_B-\alpha_iI)^{p_i+1}Xe^*_{j,l-p_i}=0,$$
and that contradicts Corollary \ref{vect}. Thus, if $\lambda=\frac{\alpha_i}{\alpha_j}$ and $X\neq0$, then
$l$ must be in the range
$\{0,1,...,\min(p_i-1,p_j-1)\}$, and the operator
$$X_{i,j}:=\sum_{k=0}^{l}(\sum_{r=0}^{k}c_{k-r}(\frac{\alpha_i}{\alpha_j})^r
\frac{(l+r-k)!(p_i-r-1)!}{(l-k)!(p_j-1)!} e^*_{i,p_i-r-1})\otimes e_{j,l-k}$$
$$where\ c_{k-r}\in\mathbb{C},\ c_0\neq0\ and\ l=0,...,\min(p_i-1,p_j-1),\ 
,$$
is a nonzero solution of
\begin{equation}\label{q2}
 S_BX= \frac{\alpha_i}{\alpha_j}XS_B.
\end{equation}
Assume that $n$ is a natural number different from $j$ (i.e., $\alpha_n\neq\alpha_j$). Now, we find
the image of $e^*_{n,l}$ for $l=0,1,...,p_n-1$, under the operator $X$ that verify (\ref{q2}), hence
\begin{equation*}
 S_BXe^*_{n,l} = \left\{
  \begin{array}{l l}
    \frac{\alpha_i}{\alpha_j}\alpha_n Xe^*_{n,p_n-1} & \quad \text{if $l=p_n-1$}\\
    \frac{\alpha_i}{\alpha_j}\alpha_n Xe^*_{n,l}+\frac{\alpha_i}{\alpha_j}(l+1)Xe^*_{n,l+1} & \quad \text{if $l=0,...,p_n-2$.}\\
  \end{array} \right.
\end{equation*}

therefore, once again by Corollary \ref{vect}, if there is $l$ in $\{0,1,...,p_n-1\}$ such that $Xe^*_{n,l}\neq0$,
then necessarily there is a natural number $m$ (necessarily different from $i$) such that
$$\frac{\alpha_i}{\alpha_j}=\frac{\alpha_m}{\alpha_n},\  and\  Xe^*_{n,l}=c_{0}e^*_{m,p_m-1},
\  (c_{0}\neq0)\in\mathbb{C}.$$
So, in this case, $X$ has the same behavior like the $e^*_{j,l}$ case,
i.e., $X=X_{m,n}$ is a solution of (\ref{q2}).

Thus, we have exactly described the solution of (\ref{q2}) on a set which spans
the space $K_B^2$. Consequently, $E_{ext}(\alpha_i/\alpha_j)$ is given by
$$E_{ext}(\frac{\alpha_i}{\alpha_j})=
\Span\{X_{m,n},\ \forall m,n\geq1\ where\ \frac{\alpha_m}{\alpha_n}=\frac{\alpha_i}{\alpha_j}\},$$
as desired.

\end{proof}
\end{section}

 \begin{section}{Concluding remarks}
  We finish this paper with some remarks which are summarized in the following.
First, it is clear that Theorem \ref{moi1} is a particular case of last theorem, nevertheless we have proved it as 
a direct result of Theorem \ref{fini}.

In addition, if the set of zeros $\{\alpha_i\}_{i\geq1}$ satisfies the well-known Carleson condition (see \cite{nik}),
then the set $\{e^*_{i,l}\}$ forms a Riesz basis for $K_B^2$, and the solution of \ref{q2} is given in terms
of this basis and the dual Riesz basis $\{e_{i,l}\}$.

Also, if we suppose that $\alpha_0=0$ is a zero of $B$, then by using the proof of Theorem \ref{fini}, we have that 
$\s{S_B}=\mathbb{C}$. Indeed, the operator $X=e^*_{0,p_0-1}\otimes e_{0,0}$ satisfies that 
$S_BX=0=\lambda XS_B$, for all $\lambda$ in $\mathbb{C}$.

And finally, as a direct result of (2) in Corollary \ref{fini1}, if 
$$B(z)=b_{\alpha}^{n},\  where\  n\in\mathbb{N^*}\  and\  \alpha\in\mathbb{D},$$
then $\s{S_B}=\{1\}$ and $$E_{ext}(1)=\Span\{\sum_{k=0}^{l}(\sum_{r=0}^{k}c_{k-r}
\frac{(l+r-k)!(n-r-1)!}{(l-k)!(n-1)!} e^*_{\alpha,n-r-1})\otimes e_{\alpha,l-k},$$ $$ l=0,...,n-1\}.$$

Lastly, this paper gives a complete description  of the set of extended eigenvectors of $S_u$ in the case of $u$
is a Blaschke product, and this leads naturally to the following question
\end{section}
\begin{plm}
 What is the set of extended eigenvectors of $S_u$ in the case of $u$ is a singular inner function?
\end{plm}

\begin{ack}
 The author is grateful to Professor Gilles Cassier for his suggestion to this study and his help.
\end{ack}

\bibliographystyle{abbrv}
\bibliography{ref}
\end{document}